%
%
%
%
\documentclass[12pt]{amsart}

\usepackage{amsrefs}
\usepackage{amssymb}
\usepackage{fancyhdr}
\usepackage{bbm}
\usepackage[all]{xy}
\usepackage{xcolor}
\usepackage{hyperref}
\hypersetup{
  colorlinks=true,
  linkcolor=blue,
  citecolor=red
}

\newtheorem{theorem}{Theorem}
\newtheorem*{theorem*}{Theorem}

\newtheorem{proposition}[theorem]{Proposition}
\newtheorem{claim}[theorem]{Claim}

\theoremstyle{definition}

\newtheorem*{definition*}{Definition}
\newtheorem*{lemma*}{Lemma}
\usepackage{graphicx}
\usepackage{pstricks, enumerate, pst-node, pst-text, pst-plot}

\numberwithin{equation}{section}
\numberwithin{theorem}{section}

\newcommand{\R}{\mathbb{R}}

\newcommand{\Z}{\mathbb{Z}}

\newcommand*\rfrac[2]{{}^{#1}\!/_{#2}}

\def\cc{\curvearrowright}

\newcommand{\RNum}[1]{\uppercase\expandafter{\romannumeral #1\relax}}
\newcommand\restr[2]{{
  \left.\kern-\nulldelimiterspace 
  #1 
  \vphantom{\big|} 
  \right|_{#2} 
  }}

\begin{document}

\title[]{Thompson's group $F$ is not strongly amenable}

\author[Y.\ Hartman]{Yair Hartman}

\author[K.\ Juschenko]{Kate Juschenko}
\author[O.\ Tamuz]{Omer Tamuz}
\author[P.\ Vahidi Ferdowsi]{Pooya Vahidi Ferdowsi}
\address[Y.\ Hartman, K.\ Juschenko]{Northwestern University}
\address[O.\ Tamuz, P.\ Vahidi Ferdowsi]{California Institute of Technology}


\date{\today}

\begin{abstract}
  We show that Thompson's group $F$ has a topological action on a
  compact metric space that is proximal and has no fixed points.
\end{abstract}

\maketitle
\section{Introduction}
In his book ``Proximal Flows''~\cite[Section~\RNum{2}.3, p.\ 19]{glasner1976proximal} Glasner
defines the notion of a {\em strongly amenable group}: A group is
strongly amenable if each of its proximal actions on a compact
space has a fixed point.  A continuous action $G \cc X$ of a
topological group on a compact Hausdorff space is proximal if for
every $x, y \in X$ there exists a net $\{g_n\}$ of elements of $G$
such that $\lim_n g_n x = \lim_n g_n y$.

Glasner shows that virtually nilpotent groups are strongly amenable
and that non-amenable groups are not strongly amenable. He also gives
examples of amenable --- in fact, solvable --- groups that are not
strongly amenable. Glasner and
Weiss~\cite{glasner2002minimal} construct proximal minimal actions of
the group of permutations of the integers, and Glasner constructs proximal flows of Lie groups~\cite{glasner1983proximal}. To the best of our knowledge there are no other such examples known. Furthermore, there are no other known examples of minimal proximal actions that are not also {\em strongly
  proximal}. An action $G \cc X$ is strongly proximal if the orbit
closure of every Borel probability measure on $G$ contains a point
mass measure. This notion, as well as that of the related Furstenberg
boundary~\cites{furstenberg1963poisson, furstenberg1973boundary,
  furman2003minimal}, have been the object of a much larger research
effort, in particular because a group is amenable if and only if all
of its strongly proximal actions on compact spaces have fixed points.

Richard Thompson's group $F$ has been alternatively ``proved'' to be
amenable and non-amenable (see, e.g.,~\cite{cannon2011thompson}), and
the question of its amenability is currently unresolved. In this paper
we pursue the less ambitious goal of showing that is it not strongly
amenable, and do so by directly constructing a proximal action that
has no fixed points. This action does admit an invariant measure, and
thus does not provide any information about the amenability of $F$. It
is a new example of a proximal action which is not strongly proximal.

\vspace{0.3in}

The authors would like to thank Eli Glasner and Benjamin Weiss for
enlightening and encouraging conversations.

\section{Proofs}
Let $F$ denote Thompson's group $F$. In the representation of $F$ as a
group of piecewise linear transformations of $\R$ (see,
e.g.,~\cite[Section 2.C]{kaimanovich2016thompson}), it is generated by $a$ and $b$ which are
given by
\begin{align*}
a(x) &= x-1\\
b(x) &= \begin{cases}
x& x \leq 0\\
x/2& 0 \leq x \leq 2\\
x-1& 2 \leq x.
\end{cases}
\end{align*}

The set of dyadic rationals $\Gamma =\Z[\frac{1}{2}]$ is the orbit of
$0$. The Schreier graph of the action $G \cc \Gamma$ with respect to
the generating set $\{a,b\}$ is shown in Figure~\ref{fig:schreier}
(see~\cite[Section 5.A, Figure 6]{kaimanovich2016thompson}). The solid
lines denote the $a$ action and the dotted lines denote the $b$
action; self-loops (i.e., points stabilized by a generator) are
omitted. This graph consists of a tree-like structure (the blue and
white nodes) with infinite chains attached to each node (the red
nodes).
\begin{figure}[ht]
  \centering
  \includegraphics[scale=0.6]{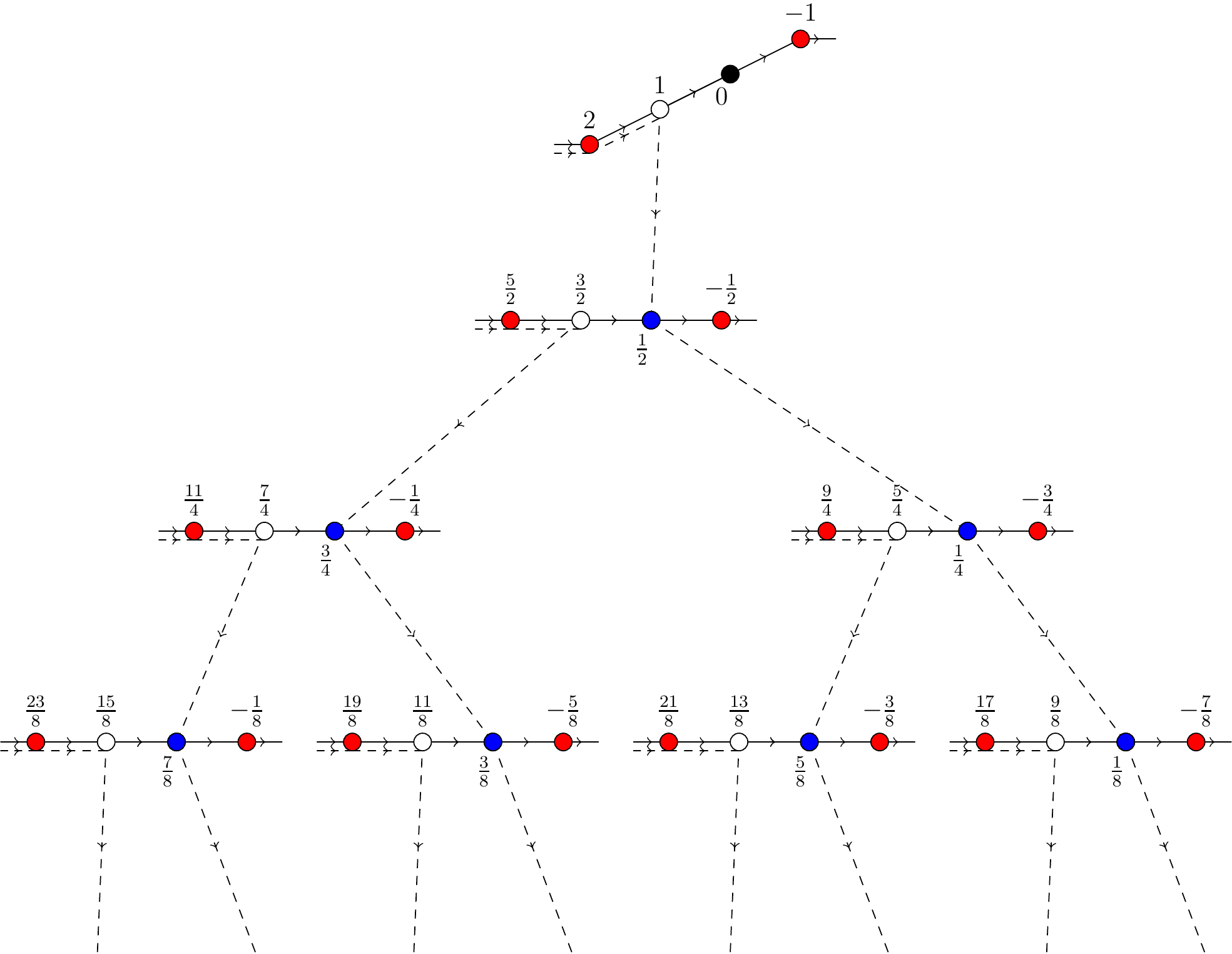}
  
  \caption{\label{fig:schreier}The action of $F$ on $\Gamma$.}
\end{figure}

Equipped with the product topology, $\{-1,1\}^\Gamma$ is a compact
space on which $F$ acts continuously by shifts:
\begin{align}
	\label{shift-action}
  	[f x](\gamma) = x(f^{-1}\gamma).
\end{align}

\begin{proposition}
  \label{prop:pre_proximal}
  Let $c_{-1}, c_{+1} \in \{-1,1\}^{\Gamma}$ be the constant
  functions. Then for any $x \in \{-1,1\}^{\Gamma}$ it holds that at
  least one of $c_{-1},c_{+1}$ is in the orbit closure
  $\overline{F x}$.
\end{proposition}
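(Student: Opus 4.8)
The plan is to prove the equivalent pointwise statement dictated by the product topology. A point $z$ lies in $\overline{Fx}$ exactly when, for every finite window $W \subseteq \Gamma$, there is $g \in F$ with $(gx)|_W = z|_W$; and since $[gx](\gamma) = x(g^{-1}\gamma)$, the condition $(gx)|_W \equiv \epsilon$ for a constant $\epsilon \in \{-1,1\}$ is precisely that $g^{-1}(W) \subseteq x^{-1}(\epsilon)$. Thus it suffices to show that for some fixed $\epsilon$, every finite $W$ admits $g \in F$ whose inverse carries $W$ into the single color class $x^{-1}(\epsilon)$, i.e.\ a ``monochromatic copy'' of $W$. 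Since $\Gamma$ is infinite and $x$ is two-valued, at least one class $x^{-1}(\epsilon)$ is infinite, and I would split into the case where both are infinite (aiming to place both constants in $\overline{Fx}$) and the case where only one is (aiming for that one).

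The engine of the argument is a high transitivity property of $F$ acting on $\Gamma$: because every element of $F$ is an increasing homeomorphism, it preserves the order of $\Gamma$, and I claim $F$ is transitive on $k$-element subsets of $\Gamma$ for every $k$ (equivalently, on strictly increasing $k$-tuples of dyadics). Granting this, the reduction is immediate: fix a finite window $W$ with $k = |W|$ and an infinite color class $C = x^{-1}(\epsilon)$, choose any $k$-element subset $S \subseteq C$, and use transitivity to pick $g \in F$ with $g(S) = W$. Then $g^{-1}(W) = S \subseteq C$, so $(gx)|_W \equiv \epsilon$, placing $gx$ in the basic neighborhood of $c_\epsilon$ cut out by $W$. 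As $W$ was arbitrary, $c_\epsilon \in \overline{Fx}$; when both classes are infinite this even yields $c_{-1}, c_{+1} \in \overline{Fx}$ simultaneously, and when only one class is infinite it yields the corresponding constant. Either way at least one of the two constants lies in the orbit closure.

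The main obstacle is therefore the transitivity claim, which is where the concrete structure of $F$ must be used. I would prove it by an explicit piecewise-linear construction: given increasing dyadic tuples $p_1 < \cdots < p_k$ and $q_1 < \cdots < q_k$, build $g \in F$ with $g(p_i) = q_i$ by defining it separately on each bounded interval $[p_i, p_{i+1}]$ and on the two unbounded ends, where $g$ is taken to be an integer translation so as to lie in $F$. The crux is the interval-matching step --- realizing an orientation-preserving homeomorphism $[p_i, p_{i+1}] \to [q_i, q_{i+1}]$ with dyadic breakpoints and power-of-two slopes --- which is the standard equivalence of dyadic intervals under Thompson's group and can be carried out by inserting one or two interior breakpoints and solving for admissible power-of-two slopes. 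One must also confirm that the elements so produced genuinely belong to $\langle a, b \rangle = F$; alternatively, the required $g$ can be assembled directly from the generators $a$ and $b$ by reading off their dynamics on the Schreier graph of Figure~\ref{fig:schreier}.
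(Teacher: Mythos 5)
Your proposal is correct and takes essentially the same approach as the paper: both arguments pick a color class $x^{-1}(\varepsilon)$ that is infinite and invoke high transitivity of $F \cc \Gamma$ on finite sets of equal size to carry a monochromatic set onto an arbitrary finite window $W$, so that $gx$ agrees with $c_\varepsilon$ on $W$ and hence $c_\varepsilon \in \overline{Fx}$. The only difference is that the paper cites this transitivity as a known fact (Lemma 4.2 of \cite{cannon1994notes}), whereas you sketch a direct piecewise-linear construction of it, which is the standard and correct proof of that lemma.
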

\begin{proof}
  It is known that the action $F \cc \Gamma$ is highly-transitive
  (Lemma 4.2 in ~\cite{cannon1994notes}), i.e. for every finite $V, W \subset \Gamma$ of the same size there exists
  a $f \in F$ such that $f(V)=W$.  Let $x\in \{-1,1\}^{\Gamma}$. There
  is at least one of -1 and 1, say $\alpha$, for which we have
  infinitely many $\gamma \in \Gamma$ with $x(\gamma)=\alpha$. Given a finite $W \subset \Gamma$ choose a $V \subset \Gamma$ of the same size and such that $x(\gamma) = \alpha$ for all $\gamma \in V$. Then there is some $f \in F$ with $f(V) = W$, and so $f x$ takes the value $\alpha$ on $W$. Since $W$ is arbitrary we have that $c_\alpha$ is in the orbit closure of $x$.
\end{proof}

Given $x_1,x_2 \in \{-1,1\}^{\Gamma}$, let $d$ be their pointwise product,
given by $d(\gamma) = x_1(\gamma) \cdot x_2(\gamma)$. By
Proposition~\ref{prop:pre_proximal} there exists a sequence $\{f_n\}$
of elements in $F$ such that either $\lim_n f_n d = c_{+1}$ or
$\lim_n f_n d = c_{-1}$. In the first case
$\lim_n f_n x_1 = \lim_n f_n x_2$, while in the second case
$\lim_n f_n x_1 = -\lim_n f_n x_2$, and so this action resembles a
proximal action. In fact, by identifying each
$x \in \{-1,1\}^{\Gamma}$ with $-x$ one attains a proximal action, and
indeed we do this below. However, this action has a fixed point ---
the constant functions --- and therefore does not suffice to prove our
result. We spend the remainder of this paper in deriving a new action
from this one. The new action retains proximality but does not have
fixed points.

Consider the path
$(\rfrac{1}{2},
\rfrac{1}{4},\rfrac{1}{8},\ldots,\rfrac{1}{2^n},\ldots)$
in the Schreier graph of $\Gamma$ (Figure~\ref{fig:schreier}); it
starts in the top blue node and follows the dotted edges through the
blue nodes on the rightmost branch of the tree. The pointed
Gromov-Hausdorff limit of this sequence of rooted graphs\footnote{The
  limit of a sequence of rooted graphs $(G_n,v_n)$ is a rooted graph
  $(G,v)$ if each ball of radius $r$ around $v_n$ in $G_n$ is, for $n$
  large enough, isomorphic to the ball of radius $r$ around $v$ in $G$
  (see, e.g.,~\cite[p.\ 1460]{aldous2007processes}).} is given in
Figure~\ref{fig:schreier2}, and hence is also a Schreier graph of some
transitive $F$-action $F \cc F/K$.  In terms of the topology on the
space $\mathrm{Sub}_F \subset \{0,1\}^F$ of the subgroups of $F$, the
subgroup $K$ is the limit of the subgroups $K_n$, where $K_n$ is the
stabilizer of $\rfrac{1}{2^n}$. It is easy to verify that $K$ is the
subgroup of $F$ consisting of the transformations that stabilize $0$
and have right derivative $1$ at $0$ (although this fact will not be
important). Let $\Lambda = F/K$.

\begin{figure}[ht]
  \centering
  \includegraphics[scale=0.6]{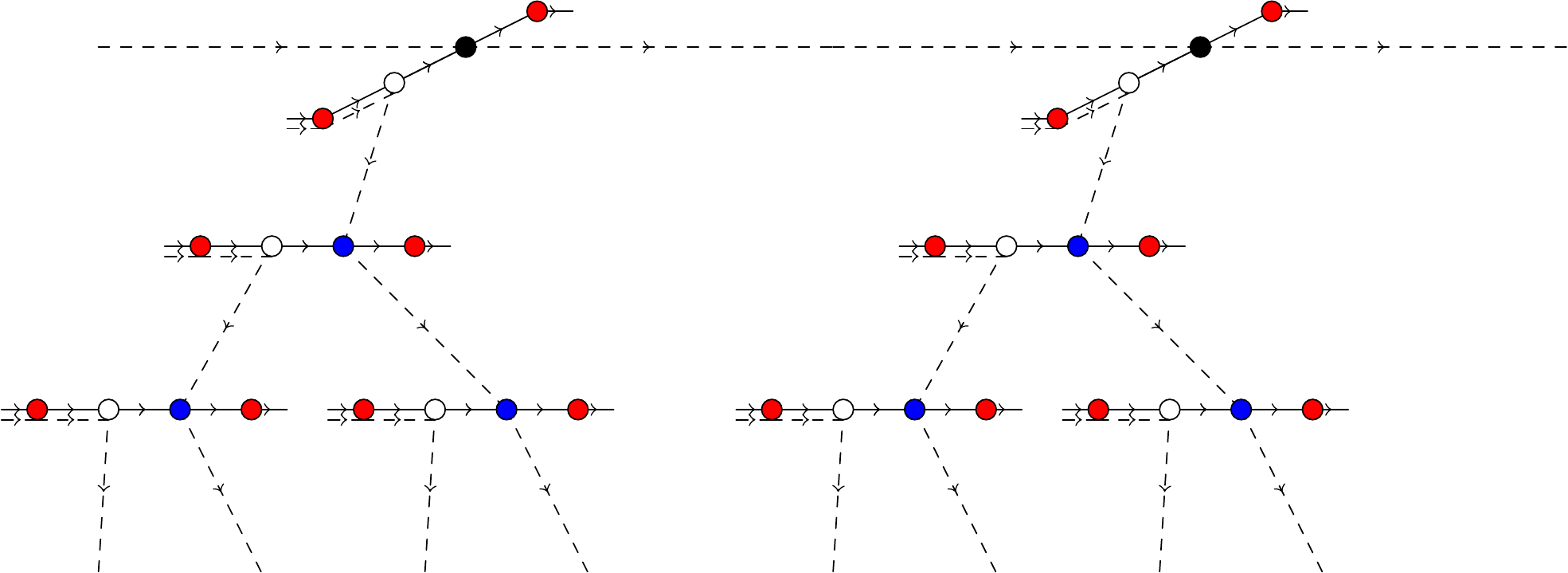}
  
  \caption{\label{fig:schreier2}The action of $F$ on $\Lambda$.}
\end{figure}

We can naturally identify with $\Z$ the chain black nodes at the top
of $\Lambda$ (see Figure~\ref{fig:schreier2}). Let $\Lambda'$ be the
subgraph of $\Lambda$ in which the dotted edges connecting the black
nodes have been removed.  Given a black node $n \in \Z$, denote by
$T_n$ the connected component of $n$ in $\Lambda'$; this includes the
black node $n$, the chain that can be reached from it using solid
edges, and the entire tree that hangs from it. Each graph $T_n$ is
isomorphic to the Schreier graph of $\Gamma$, and so the graph
$\Lambda$ is a covering graph of $\Gamma$ (in the category of Schreier
graphs).  Let
\begin{align*}
\Psi \colon \Lambda \to \Gamma
\end{align*}
be the covering map. That is, $\Psi$ is a graph isomorphism when restricted to each $T_n$, with the black nodes in $\Lambda$ mapped to the black node $0 \in \Gamma$.

Using the map $\Psi$ we give names to the nodes in $\Lambda$.  Denote
the nodes in $T_0$ as $\{(0, \gamma) \,:\, \gamma \in \Gamma\}$ so
that $\Psi(0,\gamma) = \gamma$. Likewise, in each $T_n$ denote by
$(n,\gamma)$ the unique node in $T_n$ that $\Psi$ maps to
$\gamma$. Hence we identify $\Lambda$ with
\begin{align*}
\mathbb{Z} \times \Gamma = \{(n, \gamma)\,:\, n \in \Z, \gamma \in \Gamma\}
\end{align*}
and the $F$-action is given by
\begin{align}
\label{a-action-on-Lambda}
a (n,\gamma) &= (n, a \gamma)\\
\label{b-action-on-Lambda}
b (n,\gamma) &= \begin{cases}
(n, b \gamma)&\mbox{if }\gamma \neq 0\\
(n+1, 0)&\mbox{if }\gamma= 0
\end{cases}
\end{align}

Equip $\{-1,1\}^\Lambda$ with the product topology to get a compact space. As usual, the $F$-action on $\Lambda$ (given explicitly in ~\ref{a-action-on-Lambda} and ~\ref{b-action-on-Lambda}) defines a continuous action on $\{-1,1\}^\Lambda$.

Consider $\pi:\{-1,1\}^\Gamma \to \{-1,1\}^\Lambda$, given by $\pi(x)(n, \gamma) = (-1)^n x(\gamma)$. Let $Y = \pi(\{-1,1\}^\Gamma) \subseteq \{-1,1\}^\Lambda$.

\begin{claim}
  \label{clm:compact-and-invariant}
  $Y$ is compact and $F$-invariant.
\end{claim}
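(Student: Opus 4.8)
The compactness of $Y$ is immediate. The map $\pi$ is continuous, since each coordinate $\pi(x)(n,\gamma) = (-1)^n x(\gamma)$ depends on $x$ only through the single coordinate $x(\gamma)$. As $\{-1,1\}^\Gamma$ is compact and $\{-1,1\}^\Lambda$ is Hausdorff, its image $Y = \pi(\{-1,1\}^\Gamma)$ is compact.

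The interesting point is $F$-invariance, and the plan is to first record an intrinsic description of $Y$ not mentioning $\pi$. Write $\epsilon(n,\gamma) := (-1)^n$. I claim that $y \in Y$ if and only if $y(n,\gamma) = (-1)^n y(0,\gamma)$ for all $n \in \Z$ and $\gamma \in \Gamma$; equivalently, the product $\epsilon \cdot y$ is constant on each fiber $\Psi^{-1}(\gamma) = \{(n,\gamma) : n \in \Z\}$ of the covering map $\Psi$. Indeed, $y = \pi(x)$ forces $y(0,\gamma) = x(\gamma)$ and hence $y(n,\gamma) = (-1)^n y(0,\gamma)$, while conversely any such $y$ equals $\pi(x)$ with $x(\gamma) := y(0,\gamma)$. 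So membership in $Y$ is governed by how $F$ interacts with the fibration $\Psi$ and the sign $\epsilon$.

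The core of the argument is to control the way an element $f \in F$ moves the $\Z$-coordinate. From (\ref{a-action-on-Lambda})--(\ref{b-action-on-Lambda}) each generator acts by $g(n,\gamma) = (n + c_g(\gamma), g\gamma)$ with $c_a \equiv 0$ and $c_b(\gamma) = \ind{\gamma = 0}$, and the composition identity $c_{gh}(\gamma) = c_h(\gamma) + c_g(h\gamma)$ propagates this form by induction to every $g \in F$, yielding a well-defined $c_g \colon \Gamma \to \Z$. Crucially, $c_g(\gamma)$ does not depend on $n$; this is exactly what makes the construction work, and it is the step I regard as the main obstacle, since it is what lets the twisting sign be expressed as a function of $\gamma$ alone. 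Granting it, the formula also gives $\Psi(g\lambda) = g\,\Psi(\lambda)$, so that for $y \in Y$ with $\epsilon \cdot y = z \circ \Psi$ and $\lambda = (n,\gamma)$ one computes
\begin{align*}
  \epsilon(\lambda)\,[fy](\lambda) = \epsilon(\lambda)\,\epsilon(f^{-1}\lambda)\,z\bigl(\Psi(f^{-1}\lambda)\bigr) = (-1)^{\,c_{f^{-1}}(\gamma)}\,z(f^{-1}\gamma),
\end{align*}
using $\epsilon(\lambda)\epsilon(f^{-1}\lambda) = (-1)^{2n + c_{f^{-1}}(\gamma)} = (-1)^{c_{f^{-1}}(\gamma)}$. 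The right-hand side depends only on $\gamma$, so $\epsilon \cdot (fy)$ is again fiber-constant and $fy \in Y$, proving invariance.

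If one prefers to avoid the cocycle, it suffices (as $F = \langle a, b\rangle$) to check $fY \subseteq Y$ for $f \in \{a^{\pm 1}, b^{\pm 1}\}$ directly against the criterion $y(n,\gamma) = (-1)^n y(0,\gamma)$. The only case needing care is $b^{\pm 1}$ at a black node $\gamma = 0$, where the $\Z$-coordinate is shifted by $\pm 1$ and produces precisely the compensating sign $(-1)^{\pm 1}$ demanded by the criterion; all other cases leave the $\Z$-coordinate fixed and are routine.
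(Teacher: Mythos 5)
Your proof is correct, and your main argument takes a genuinely different route from the paper's. The paper proves invariance by a direct check on generators: it verifies the intertwining identities $a^{\pm 1}\pi(x) = \pi(a^{\pm 1}x)$ and $b^{\pm 1}\pi(x) = \pi(b^{\pm 1}\bar{x})$, where $\bar{x}$ agrees with $x$ except for a sign flip at $\gamma = 0$, and concludes since $a, b$ generate $F$ --- which is essentially the alternative you sketch in your last paragraph. Your primary route instead characterizes $Y$ intrinsically (membership means $\epsilon \cdot y$ is constant on the fibers of $\Psi$, with $\epsilon(n,\gamma) = (-1)^n$), introduces the $\Z$-valued cocycle $c_g$ recording the displacement of the $\Z$-coordinate, and shows in one stroke that every $f \in F$ preserves fiber-constancy because $\epsilon(\lambda)\,\epsilon(f^{-1}\lambda) = (-1)^{c_{f^{-1}}(\gamma)}$ depends on $\gamma$ alone; the computation is right, and the one point worth making explicit is that the induction on word length also needs $a^{-1}, b^{-1}$ to have the form $g(n,\gamma) = (n + c_g(\gamma), g\gamma)$, which is immediate from $c_{g^{-1}}(\gamma) = -c_g(g^{-1}\gamma)$. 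As for what each approach buys: the paper's check is shorter and entirely elementary, but leaves the role of the sign twist somewhat implicit; your version isolates the structural reason the construction works --- the skew-product cocycle over $\Gamma$ is independent of the fiber coordinate, so the $(-1)^n$ twist is compatible with the whole action at once --- yields the equivariance $\Psi(g\lambda) = g\,\Psi(\lambda)$ as a byproduct, and treats all of $F$ uniformly rather than generator by generator.
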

\begin{proof}
$\pi$ is injective and continuous, so $Y = \pi(\{-1,1\}^\Gamma) \subseteq \{-1,1\}^\Lambda$ is compact and isomorphic to $\{-1,1\}^\Gamma$. Moreover, $Y$ is invariant to the action of $F$, because $a^{\pm 1}\pi(x) = \pi (a^{\pm 1}x)$ and $b^{\pm 1}\pi(x) = \pi(b^{\pm}\bar{x})$ where $\bar{x}(\gamma) = \begin{cases}
x(\gamma)&\mbox{if }\gamma \neq 0\\
-x(\gamma)&\mbox{if } \gamma = 0
\end{cases}$.
\end{proof}

The last $F$-space we define is $Z$, the set of pairs of mirror image configurations in $Y$:
\begin{align}
\label{the-space-Z}
  Z = \left\{\{y, -y\}\,:\,y\in Y \right\}.
\end{align}

Now it is clear that equipped with the quotient topology, $Z$ is a
compact and Hausdorff $F$-space. Furthermore, we now observe that $Z$ admits an
invariant measure.  Consider the i.i.d.\ Bernoulli $1/2$ measure on
$\{-1,1\}^\Gamma$, i.e. the unique Borel measure on $\{-1,1\}^\Gamma$, for which
\begin{align*}
X_\gamma \colon & \{-1,1\}^\Gamma \to \{0, 1\},\quad x\mapsto \frac{x(\gamma)+1}{2}
\end{align*} are independent Bernoulli $1/2$ random variables for all $\gamma \in \Gamma$. Clearly, it is an invariant measure and hence it
is pushed forward to an invariant measure on $Y$, and then on $Z$. In particular, this
shows that $Z$ is not strongly proximal.

\begin{claim}
  \label{clm:no-fixed-points}
  The action $F \cc Z$ does not have any fixed points.
\end{claim}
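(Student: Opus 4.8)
The plan is to show that no class $\{y,-y\}\in Z$ is fixed by all of $F$; in fact it suffices to exhibit a single group element that moves every such class. Since the shift action commutes with negation, $f(-y)=-fy$, so a class $\{y,-y\}$ is fixed by $f$ precisely when $fy\in\{y,-y\}$. Writing $y=\pi(x)$ for $x\in\{-1,1\}^\Gamma$ and recalling from Claim~\ref{clm:compact-and-invariant} that $\pi$ is injective and that $-\pi(x)=\pi(-x)$, I would translate the fixed-point condition for the single generator $b$ back to the space $\{-1,1\}^\Gamma$: since $by=\pi(b\bar x)$, the condition $by\in\{y,-y\}$ holds if and only if $b\bar x\in\{x,-x\}$.

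Next I would read this condition off coordinate by coordinate. Writing $b\bar x=\beta x$ for $\beta\in\{-1,1\}$ in the equivalent form $\bar x(\delta)=\beta\,x(b\delta)$ for all $\delta\in\Gamma$, I first evaluate at the black node $\delta=0$. There $b(0)=0$ while $\bar x(0)=-x(0)$, so the equation becomes $-x(0)=\beta\,x(0)$, which rules out $\beta=+1$ and forces $b\bar x=-x$. This is precisely where the sign twist built into $\bar x$ (equivalently, the factor $(-1)^n$ in the definition of $\pi$) does its work: the level-shifting of $b$ at the black node is incompatible with $b$ preserving the class unless there is a global sign flip.

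Having forced $\beta=-1$, I would then exploit the abundance of fixed points of $b$ in $\Gamma$. Since $b$ acts as the identity on all non-positive reals, every negative dyadic $\delta<0$ satisfies $b\delta=\delta$, and moreover $\bar x(\delta)=x(\delta)$ because $\delta\neq0$. The equation $\bar x(\delta)=-x(b\delta)$ then reduces to $x(\delta)=-x(\delta)$, which is impossible for $x(\delta)\in\{-1,1\}$. Hence $b\bar x\notin\{x,-x\}$, so $by\notin\{y,-y\}$, and $\{y,-y\}$ is not fixed. As $y\in Y$ was arbitrary, the action $F\cc Z$ has no fixed point.

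The argument is short, and the only real subtlety I would be careful to get right is the bookkeeping of signs, together with the choice of $b$ rather than $a$ as the element to test. The generator $b$ is the right one precisely because it simultaneously shifts the black-node level (producing the discrepancy at $\delta=0$ that eliminates $\beta=+1$) and fixes an entire ray of dyadics (producing the contradiction once $\beta=-1$ is forced). Testing $a$ alone would not suffice, since $a x=\alpha x$ is solvable for either sign $\alpha$; it is the interplay between these two features of $b$ that is the crux of the proof.
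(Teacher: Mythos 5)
Your proposal is correct and takes essentially the same approach as the paper: the paper's proof also tests only the generator $b$ and makes exactly your two observations, namely that the black-node level shift forces $[by](0,0)=y(-1,0)=-y(0,0)$ (your exclusion of $\beta=+1$) while $b$ fixing the negative dyadics gives $[by](0,-1)=y(0,-1)\neq -y(0,-1)$ (your $\beta=-1$ contradiction). The only difference is bookkeeping: the paper evaluates $by$ directly at the points $(0,0)$ and $(0,-1)$ of $\Lambda$, whereas you pull the fixed-point condition back to $\{-1,1\}^\Gamma$ via the identity $b\pi(x)=\pi(b\bar{x})$ from Claim~\ref{clm:compact-and-invariant}.
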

\begin{proof}
Pick $\hat{y} = \{y, -y\}\in Z$.
We have $[by](0, -1) = y(0, -1) \neq -y(0, -1)$, so $by\neq -y$. Similarly, $[b y](0, 0) = y(-1, 0) = -y(0, 0) \neq y(0, 0)$, and so $by \neq y$. Hence $b\hat{y}\neq \hat{y}$.
\end{proof}

\begin{proposition}
  \label{thm:proximal}
  The action $F \cc Z$ is proximal.
\end{proposition}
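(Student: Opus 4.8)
The plan is to show that any two points $\hat y_1,\hat y_2\in Z$ can be synchronized, i.e.\ to produce a net $\{f_n\}$ in $F$ with $\lim_n f_n\hat y_1=\lim_n f_n\hat y_2$. I would write $\hat y_i=\{y_i,-y_i\}$ and lift each to a representative $y_i=\pi(x_i)\in Y$ with $x_i\in\{-1,1\}^\Gamma$, which is possible since every element of $Y$ is $\pi(x)$ for a unique $x$. Because the quotient map $q\colon Y\to Z$, $y\mapsto\{y,-y\}$, is continuous and $F$-equivariant, it suffices to find a net $\{f_n\}$ along which $f_n y_1$ and $f_n y_2$ converge in $Y$ to points $u,v$ with $\{u,-u\}=\{v,-v\}$, i.e.\ with $u=\pm v$.

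The heart of the argument is to reduce this two-point problem on $Y$, where the $F$-action is a twisted shift (recall $b\pi(x)=\pi(b\bar x)$ from the proof of Claim~\ref{clm:compact-and-invariant}), to a one-point problem on $\Gamma$, where Proposition~\ref{prop:pre_proximal} applies directly. The covering map $\Psi\colon\Lambda\to\Gamma$ is $F$-equivariant, which is immediate from \eqref{a-action-on-Lambda} and \eqref{b-action-on-Lambda}; hence its pullback $\Psi^{*}\colon\{-1,1\}^\Gamma\to\{-1,1\}^\Lambda$, $(\Psi^{*}x)(n,\gamma)=x(\gamma)$, is equivariant for the shift actions. The key observation is that the pointwise product of $y_1$ and $y_2$ loses its dependence on $n$:
\[
(y_1 y_2)(n,\gamma)=(-1)^n x_1(\gamma)\cdot(-1)^n x_2(\gamma)=d(\gamma),\qquad d:=x_1 x_2,
\]
so that $y_1 y_2=\Psi^{*}(d)$. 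Since both the pointwise product and $\Psi^{*}$ commute with the $F$-action, this gives $(f y_1)(f y_2)=f(y_1y_2)=\Psi^{*}(f d)$ for every $f\in F$.

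I would then invoke Proposition~\ref{prop:pre_proximal} on the single configuration $d$: there is a net $\{f_n\}$ in $F$ and a sign $\alpha\in\{-1,+1\}$ with $f_n d\to c_\alpha$. Applying the continuous map $\Psi^{*}$ gives $(f_n y_1)(f_n y_2)=\Psi^{*}(f_n d)\to\Psi^{*}(c_\alpha)$, the constant function $\alpha$ on $\Lambda$. Using compactness of $Y$, pass to a subnet with $f_n y_1\to u$ and $f_n y_2\to v$ in $Y$; along this subnet $f_nd\to c_\alpha$ still holds, so continuity of the product forces $u(n,\gamma)v(n,\gamma)=\alpha$ for all $(n,\gamma)$. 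As $u,v$ are $\{-1,1\}$-valued this means $u=\alpha v$, hence $u=v$ if $\alpha=+1$ and $u=-v$ if $\alpha=-1$. In either case $\{u,-u\}=\{v,-v\}$, and applying the equivariant $q$ yields $\lim_n f_n\hat y_1=\{u,-u\}=\{v,-v\}=\lim_n f_n\hat y_2$.

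The substance of the proof is the middle step: recognizing that the product of two elements of $Y$ is the $\Psi$-pullback of a configuration on $\Gamma$, so that Proposition~\ref{prop:pre_proximal} applies even though the action on $Y$ itself is twisted. The sign ambiguity intrinsic to that proposition (whether $d$ is driven to $c_{+1}$ or to $c_{-1}$) is precisely what is absorbed by the quotient $Z=Y/\{\pm1\}$, which is why the quotient is genuinely proximal while the pre-quotient object of the earlier discussion only \emph{resembles} a proximal action. I expect the only routine points to be the verification of equivariance of $\Psi$ and the passage to a convergent subnet, neither of which should present a real obstacle.
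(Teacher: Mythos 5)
Your proposal is correct and is essentially the paper's own argument: both reduce to Proposition~\ref{prop:pre_proximal} applied to the pointwise product $d = x_1 \cdot x_2$, use compactness of $Y$ to extract convergent (sub)sequences, and observe that the resulting sign ambiguity $u = \pm v$ is exactly what the quotient $Z$ absorbs. Your packaging via the equivariant pullback $\Psi^{*}$ (noting $y_1 y_2 = \Psi^{*}(d)$) is just a cleaner way of stating the paper's coordinate computation $[f_n y_1 \cdot f_n y_2](n,\gamma) = (-1)^{2n}[f_n(x_1\cdot x_2)](\gamma)$, and it makes explicit the twist-cancellation the paper calls ``straightforward to check.''
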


\begin{proof}
Let $\hat{y_1}=\{y_1, -y_1\}$ and $\hat{y_2}=\{y_2,-y_2\}$ be two points in $Z$, and let $y_i=\pi(x_i)$.

Let $x_1 \cdot x_2$ denote the pointwise product of $x_1$ and $x_2$. Now by Proposition~\ref{prop:pre_proximal} there is a sequence of elements $\{f_n\}_n$ in $F$ such that $\{f_n (x_1 \cdot x_2)\}_n$ tends to either $c_{-1}$ or $c_{+1}$ in $\{-1,1\}^\Gamma$. Since $Y$ is compact, we may assume that $\{f_n y_1\}_n$ and $\{f_n y_2\}_n$ have limits, by descending to a subsequence if necessary.

It is straightforward to check that $f_n y_1 \cdot f_n y_2 = f_n\pi(x_1)\cdot f_n\pi(x_2)=\pi(f_n x_1) \cdot \pi(f_n x_2)$. So:
\begin{align*}
[f_n y_1 \cdot f_n y_2](n,\gamma) &= [\pi(f_n x_1) \cdot \pi(f_n x_2)](n, \gamma)\\
&= (-1)^{2n}\;[f_n x_1](\gamma)\;[f_n x_2](\gamma)\\
&=[f_n x_1 \cdot f_n x_2](\gamma) = [f_n (x_1 \cdot x_2)](\gamma)
\end{align*}
So $\lim_n f_n y_1 = \pm \lim_n f_n y_2$, which implies $\lim_n f_n \hat{y_1} = \lim_n f_n \hat{y_2}$.
\end{proof}

\begin{theorem}
 Thompson's group $F$ is not strongly amenable.
\end{theorem}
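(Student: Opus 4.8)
The plan is to simply assemble the pieces already in place. By Glasner's definition recalled in the introduction, a group is strongly amenable precisely when every one of its proximal actions on a compact space admits a fixed point. Hence to prove that $F$ is \emph{not} strongly amenable it suffices to exhibit a single proximal action of $F$ on a compact Hausdorff space that has no fixed point. The action $F \cc Z$ constructed above is exactly the required witness, so the task reduces to confirming that it has all three properties.

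First I would recall that $Z$, equipped with the quotient topology, is a compact Hausdorff space carrying a continuous $F$-action; this was noted immediately after the definition of $Z$ in~\eqref{the-space-Z}, and it rests on Claim~\ref{clm:compact-and-invariant} together with the fact that $Z$ is the quotient of the compact space $Y$ by the free involution $y \mapsto -y$. Next I would invoke Proposition~\ref{thm:proximal}, which establishes that $F \cc Z$ is proximal, and Claim~\ref{clm:no-fixed-points}, which shows that this action has no fixed point. Combining these three facts with the definition of strong amenability yields the theorem at once.

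The genuinely difficult part of the argument is not this final deduction, which is purely a matter of citing what has been proved, but rather the construction that precedes it. The shift action on $\{-1,1\}^\Gamma$ becomes proximal only after identifying $x$ with $-x$, and in that quotient the constant functions remain fixed; the whole point of passing to the covering graph $\Lambda = F/K$ (obtained as a pointed Gromov--Hausdorff limit in $\mathrm{Sub}_F$), then to the twisted embedding $\pi$, and finally to the mirror-pair quotient $Z$, is to retain proximality while destroying the fixed point. That is where all the content resides. Given that machinery, the theorem follows with no further obstacle.
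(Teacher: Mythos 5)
Your proposal is correct and follows exactly the paper's own proof: both simply combine Proposition~\ref{thm:proximal} (proximality of $F \cc Z$) and Claim~\ref{clm:no-fixed-points} (absence of fixed points) with the definition of strong amenability. Your added remark that the real content lies in the construction of $Z$ rather than in this final deduction is accurate, and the deduction itself is carried out just as in the paper.
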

\begin{proof}
Since the space $Z$ we constructed above is proximal (Proposition~\ref{thm:proximal}), and has no fixed points (Claim~\ref{clm:no-fixed-points}), we conclude that $F$ has a proximal action with no fixed points, so $F$ is not strongly amenable.
\end{proof}

\bibliography{thompson}
\end{document}